\newlist{clist}{enumerate}{1}
\setlist*[clist]{label=(\roman*),nosep}
\theoremstyle{definition}
\newtheorem{Def}{Definition}[section]
\newtheorem{Thm}[Def]{Theorem}
\newtheorem{Lem}[Def]{Lemma}
\newtheorem{Cor}[Def]{Corollary}
\newcommand{\C}{\mathbb{C}} 
\newcommand{\N}{\mathbb{N}} 
\title{An operator-coefficients free Poincar\'{e} inequality}
\author{Hyuga Ito}
\address{
Graduate School of Mathematics, Nagoya University, Furocho, Chikusaku, Nagoya, 464-8602, Japan
}
\email{hyuga.ito.e6@math.nagoya-u.ac.jp}
\date{\today}
\begin{document}

\maketitle

\begin{abstract}
 We prove the following operator-coefficients free Poincar\'{e} inequality:
 \[
 |f(X)-E[f(X)]|_{2}\leq2|X|_{2}\|\widehat{\partial}_{X:B}[f(X)]\|_{\pi}, \quad f(X) \in \mathrm{dom}(\widehat{\partial}_{X:B}),
 \]
 where $\|\cdot\|_{\pi}$ is the projective tensor norm.
\end{abstract}

\allowdisplaybreaks{

\section{Introduction}
Let $M$ be a von Neumann algebra with a faithful normal tracial state $\tau$ on $M$, and $B$ be a unital von Neumann subalgebra of $M$ with a (unique) $\tau$-preserving conditional expectation $E$ from $M$ onto $B$. Let $X$ be a self-adjoint elememt of $M$, which is assumed to be algebraically free from $B$. Let $B\langle X\rangle$ denote the family of all $B$-valued non-commutative polynomials, that is, 
\begin{equation*}
    B\langle X\rangle
    =\mathrm{span}\{b_{0}Xb_{1}X\dots Xb_{n} |n\in\N,\,b_{i}\in B\},
\end{equation*}
and $\mu$ denotes the usual multiplication on $B\langle X\rangle$. 
The free difference quotient $\partial_{X:B}:B\langle X\rangle\to B\langle X\rangle^{\otimes2}$ is the unique $B\langle X\rangle^{\otimes2}$-valued derivation on $B\langle X\rangle$ that satisfies 
$\partial_{X:B}[X]=1\otimes1$ and $\partial_{X:B}[b]=0$ for any $b\in B$. 
Let $L^2(M,\tau)=L^2(M)$ denote the completion of $M$ with respect to the (tracial) $L^2$-norm defined by $|a|_{2}=\tau(a^*a)^{\frac{1}{2}}$ for every $a\in M$. 
Set $B\langle t\rangle:=B*\C\langle t\rangle$ (algebraic free product) with indeterminate $t$. Note that any element of $B\langle t\rangle$ is a linear combination of monomials $b_{0}tb_{1}t\cdots tb_{n}$ $(b_{i}\in B)$. For any $R>0$, let $B_{R}\{t\}$ be the completion of $B\langle t\rangle$ with respect to the norm $\||\cdot\||_{R}$ defined by 
\begin{equation*}
    \||p(t)\||_{R}=\inf\left\{\sum_{k=1}^{n}\|b_{k,0}\|\cdot\|b_{k,1}\|\cdots\|b_{k,m(k)}\|R^{m(k)}\middle|p(t)=\sum_{k=1}^{n}b_{0}tb_{1}\cdots tb_{m(k)}\right\}
\end{equation*}
for every $p(t)\in B\langle t\rangle$.

The purpose of this short note is to give, with a very simple proof, an operator-coefficients free Poincar\'{e} inequality, which is almost the same as what Voiculescu conjectured (see \cite{aim06}) but we choose the norm of $\partial_{X:B}[p(X)]$ here to be the projective tensor norm instead of the $L^2$-norm. We remark that a scalar-coefficients free Poincar\'{e} inequality has successfully been established by Voiculescu in his unpublished note, and its proof can be found in e.g. \cite[section 8.1]{misp17}.

\medskip
The author acknowledges his supervisor Professor Yoshimichi Ueda for conversations, comments and editorial supports to this note. The author also expresses his sincere gratitude to Professor Dan Voiculescu who kindly gave him and his supervisor an additional explanation to \cite{voi00}.

\section{Main result}\label{pre} 
 In this section, we assume that $\partial_{X:B}$ from $(C^*(B\langle X\rangle),\|\cdot\|)$ to $(C^*(B\langle X\rangle)^{\overline{\otimes}2},\|\cdot\|)$ is closable (this follows from the existence of conjugate variable in $L^2(M)$, see \cite[Corollary 4.2]{voi98} and \cite[section 3.2]{voi00}), and $\overline{\partial}_{X:B}$ denotes the closure of $\partial_{X:B}$ with respect to $\|\cdot\|$ on both sides.
Also, it is true that the natural map from the projective tensor product $C^*(B\langle X\rangle)^{\widehat{\otimes}2}\subset M^{\widehat{\otimes}2}$ into $C^*(B\langle X\rangle)^{\overline{\otimes}2}\subset M^{\overline{\otimes}2}$ is injective. This indeed follows from Haagerup's famous work \cite[Proposition 2.2]{ha85}. Hence, $\partial_{X:B}$ from $(C^*(B\langle X\rangle),\|\cdot\|)$ to $(C^*(B\langle X\rangle)^{\widehat{\otimes}2},\|\cdot\|_{\pi})$ is closable if so is $\partial_{X:B}$ from $(C^*(B\langle X\rangle),\|\cdot\|)$ to $(C^*(B\langle X\rangle)^{\overline{\otimes}2},\|\cdot\|)$ and $\widehat{\partial}_{X:B}$ denotes the closure of $\partial_{X:B}$ with respect to $\|\cdot\|$ and $\|\cdot\|_{\pi}$.

Voiculescu introduced a certain smooth subalgebra of $C^*(B\langle X\rangle)$, which is a kind of Sobolev space (see \cite[section 4]{voi98}). Let $B^{(1)}(X)$ be the completion of $B\langle X\rangle$ with respect to the norm $\||\cdot\||_{(1)}$ defined by 
\begin{equation*}
    \||p(X)\||_{(1)}:=\|p(X)\|+\|\partial_{X:B}[p(X)]\|_{\pi}
\end{equation*}
for any $p(X)\in B\langle X\rangle$. The resulting space becomes a Banach$*$-algebra. Then, we can show the next two lemmas. 

\begin{Lem}\label{sob}
    We have the following facts:
    \begin{enumerate}
        \item\label{sob1} For any $\eta\in B^{(1)}(X)$ there exist $\eta_{\pi}\in C^*(B\langle X\rangle)^{\widehat{\otimes}2}$ and $\eta_{\infty}\in C^*(B\langle X\rangle)$ such that $\||\eta\||_{(1)}=\|\eta_{\infty}\|+\|\eta_{\pi}\|_{\pi}$.
        \item\label{sob2} The correspondence $\iota:B^{(1)}(X)\to C^*(B\langle X\rangle)$ given by $\iota[\eta]:=\eta_{\infty}$ for every $\eta\in B^{(1)}(X)$ defines a contractive algebra homomorphism with $\iota|_{B\langle X\rangle}=\mathrm{id}_{B\langle X\rangle}$. With this map, we regard $B^{(1)}(X)$ as a $*$-subalgebra of $C^*(B\langle X\rangle)$.
        \item\label{sob3} The correspondence $\widetilde{\partial}_{X:B}:B^{(1)}(X)\to C^*(B\langle X\rangle)^{\widehat{\otimes}2}$ given by $\widetilde{\partial}_{X:B}[\eta]:=\eta_{\pi}$ for every $\eta\in B^{(1)}(X)$ defines a contractive derivation and satisfies $\widetilde{\partial}_{X:B}=\widehat{\partial}_{X:B}\circ\iota$ and hence $\widetilde{\partial}_{X:B}|_{B\langle X\rangle}=\partial_{X:B}$.
        \item\label{sob4} The non-commutative functional calculus map $f(t)\mapsto f(X)$ from $B_{R}\{t\}$ to $C^*(B\langle X\rangle)$ sending $t$ to $X$ is well defined as long as $\|X\|<R$, and its range becomes a $*$-subalgebra of $B^{(1)}(X)$.
    \end{enumerate}
\end{Lem}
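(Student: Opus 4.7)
The plan is to leverage the closability of $\partial_{X:B}$ from $(C^*(B\langle X\rangle),\|\cdot\|)$ into $(C^*(B\langle X\rangle)^{\widehat{\otimes}2},\|\cdot\|_{\pi})$ recorded just above the statement, together with the fact that the two summands of $\||\cdot\||_{(1)}$ live on separate spaces. This makes $B^{(1)}(X)$ behave like the graph of $\widehat{\partial}_{X:B}$ inside the $\ell^1$-direct sum $C^*(B\langle X\rangle)\oplus C^*(B\langle X\rangle)^{\widehat{\otimes}2}$, and parts (1)--(3) amount to unpacking that picture, while (4) is a separate estimate on monomials.

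For (1) and (2), I would fix $\eta\in B^{(1)}(X)$ and choose $\{p_n(X)\}\subset B\langle X\rangle$ with $\||p_n(X)-\eta\||_{(1)}\to0$. The definition of $\||\cdot\||_{(1)}$ forces $\{p_n(X)\}$ to be Cauchy in $\|\cdot\|$ and $\{\partial_{X:B}[p_n(X)]\}$ to be Cauchy in $\|\cdot\|_{\pi}$, so they converge to some $\eta_{\infty}\in C^*(B\langle X\rangle)$ and $\eta_{\pi}\in C^*(B\langle X\rangle)^{\widehat{\otimes}2}$. Closability is precisely what is needed to see that $\eta_{\infty}$ and $\eta_{\pi}$ depend only on $\eta$, and continuity of both norms yields $\||\eta\||_{(1)}=\|\eta_{\infty}\|+\|\eta_{\pi}\|_{\pi}$. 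Linearity and contractivity of $\iota:\eta\mapsto\eta_{\infty}$ are then immediate, and multiplicativity follows from $p_n(X)q_n(X)\to\eta\xi$ in $\||\cdot\||_{(1)}$ (using that $B^{(1)}(X)$ is a Banach algebra) together with $p_n(X)q_n(X)\to\eta_{\infty}\xi_{\infty}$ in $\|\cdot\|$.

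For (3), contractivity of $\widetilde{\partial}_{X:B}$ is obvious from (1), the Leibniz rule passes to the limit using joint continuity of the bimodule actions on the projective tensor product, and $\widetilde{\partial}_{X:B}=\widehat{\partial}_{X:B}\circ\iota$ is just a rereading of the definition of closure. For (4), I would estimate directly on monomials: from
\begin{equation*}
    \partial_{X:B}[b_0 X b_1\cdots X b_n]=\sum_{i=1}^{n}(b_0 X b_1\cdots b_{i-1})\otimes(b_i X b_{i+1}\cdots X b_n)
\end{equation*}
one reads off a projective-norm bound of $n\|X\|^{n-1}\|b_0\|\cdots\|b_n\|$, so $\||b_0 X b_1\cdots X b_n\||_{(1)}\leq(\|X\|^n+n\|X\|^{n-1})\|b_0\|\cdots\|b_n\|$. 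Taking the infimum over representations of $p(t)$ then gives $\||p(X)\||_{(1)}\leq C_R\||p(t)\||_R$ whenever $\|X\|<R$, which provides the desired continuous extension from $B_R\{t\}$ into $B^{(1)}(X)$; its image is a $*$-subalgebra because the map is a continuous $*$-homomorphism on the polynomial $*$-algebra $B\langle t\rangle$.

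The one step I expect to be delicate is the estimate in (4): the free difference quotient of a degree-$n$ monomial produces $n$ terms of comparable size, and the reason the extra linear factor $n$ does not break convergence of the series defining $B_R\{t\}$ is precisely that $n(\|X\|/R)^{n-1}\to0$ whenever $\|X\|<R$ strictly. All other parts are essentially routine once the two-layer completion picture has been set up properly.
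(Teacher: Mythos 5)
Your proposal is correct and follows essentially the same route as the paper's (sketched) proof: the completion/graph-of-$\widehat{\partial}_{X:B}$ picture for (1)--(3), and for (4) the termwise monomial estimate leading to the constant $\sup_{n}n\|X\|^{n-1}/R^{n}$, finite exactly because $\|X\|<R$. One small correction: the well-definedness of $\eta_{\infty}$ and $\eta_{\pi}$ requires only that $p(X)\mapsto p(X)$ and $p(X)\mapsto\partial_{X:B}[p(X)]$ are contractive from $(B\langle X\rangle,\||\cdot\||_{(1)})$, i.e.\ the universal property of completion, not closability; closability is what makes $\widehat{\partial}_{X:B}$ exist at all and is used for the identity $\widetilde{\partial}_{X:B}=\widehat{\partial}_{X:B}\circ\iota$ and, later, for the injectivity of $\iota$ in Lemma \ref{sobd}.
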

\begin{proof}
    We give only a sketch of proof.
    
    (1) This follows from the definition of norm $\||\cdot\||_{(1)}$.

    (2) The well-definedness of $\iota$ follows from that for any $\eta\in B^{(1)}(X)$, $\eta_{\infty}$ does not depend on the choice of approximation sequence of $\eta_{\infty}$.

    (3) The well-definedness of $\widetilde{\partial}_{X:B}$ follows similarly to \ref{sob2}. By the construction of $\widetilde{\partial}_{X:B}$ and the closability of $\widehat{\partial}_{X:B}$, we have $\widetilde{\partial}_{X:B}=\widehat{\partial}_{X:B}\circ\iota$. Showing that $\widetilde{\partial}_{X:B}$ is a derivation needs only the first part of \cite[Lemma 3.1]{voi00}, whose discussion works in the present setting. 

    (4) Use the following inequalities (see \cite[section 4]{voi98}):
    \begin{equation*}
        \|p(X)\|\leq\||p(t)\||_{R},\quad \|\partial_{X:B}[p(X)]\|\leq\|\partial_{X:B}[p(X)]\|_{\pi}\leq C\||p(t)\||_{R}
    \end{equation*}
    for any $p(t)\in B\langle t\rangle$, where $C=\sup_{n\in\N}\frac{n\|X\|^{n-1}}{R^n}$.
\end{proof}

\begin{Lem}\label{sobd}
    The map $\iota:B^{(1)}(X)\to C^*(B\langle X\rangle)$ is injective. Moreover, the range of $\iota$ is exactly $\mathrm{dom}(\widehat{\partial}_{X:B})$. 
\end{Lem}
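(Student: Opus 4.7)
The plan is to derive both statements directly from Lemma \ref{sob}\ref{sob3} together with the definition of $\widehat{\partial}_{X:B}$ as a closure. No fresh analytic input is needed: the substantive work has already been done in establishing the identity $\widetilde{\partial}_{X:B}=\widehat{\partial}_{X:B}\circ\iota$.

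For injectivity, I would start with $\eta\in B^{(1)}(X)$ satisfying $\iota[\eta]=0$ and use the composition formula from Lemma \ref{sob}\ref{sob3} to get $\widetilde{\partial}_{X:B}[\eta]=\widehat{\partial}_{X:B}[\iota[\eta]]=\widehat{\partial}_{X:B}[0]=0$. Combined with the identity $\||\eta\||_{(1)}=\|\iota[\eta]\|+\|\widetilde{\partial}_{X:B}[\eta]\|_{\pi}$ furnished by Lemma \ref{sob}\ref{sob1}, this forces $\||\eta\||_{(1)}=0$, hence $\eta=0$.

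For the range identification, the inclusion $\iota(B^{(1)}(X))\subseteq\mathrm{dom}(\widehat{\partial}_{X:B})$ is immediate from Lemma \ref{sob}\ref{sob3}, since $\widehat{\partial}_{X:B}$ sends $\iota[\eta]$ to $\widetilde{\partial}_{X:B}[\eta]$ and therefore $\iota[\eta]$ lies in its domain. For the reverse inclusion, given $a\in\mathrm{dom}(\widehat{\partial}_{X:B})$, I would invoke the definition of the closure to pick a sequence $(p_{n}(X))_{n}$ in $B\langle X\rangle$ such that $p_{n}(X)\to a$ in $\|\cdot\|$ and $\partial_{X:B}[p_{n}(X)]\to\widehat{\partial}_{X:B}[a]$ in $\|\cdot\|_{\pi}$. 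Such a sequence is automatically Cauchy with respect to $\||\cdot\||_{(1)}$ and hence represents an element $\eta\in B^{(1)}(X)$ whose associated $\eta_{\infty}$ equals $a$, giving $\iota[\eta]=a$.

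The only real obstacle is that all the genuine analytic content has been packed into Lemma \ref{sob}\ref{sob3}, which itself relies on closability of $\widehat{\partial}_{X:B}$ with respect to $\|\cdot\|$ and $\|\cdot\|_{\pi}$; taking that as given, the present lemma is essentially bookkeeping about Cauchy sequences and the two competing norms. A small point to keep in mind is that the approximating sequence $(p_{n}(X))_{n}$ used for the reverse inclusion must come from $B\langle X\rangle$ rather than from some larger dense subspace, but this is free of charge since $\widehat{\partial}_{X:B}$ is by definition the closure of $\partial_{X:B}$ initially defined on $B\langle X\rangle$.
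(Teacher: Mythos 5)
Your argument is correct and follows essentially the same route as the paper: the forward inclusion from Lemma \ref{sob}\ref{sob3}, and the reverse inclusion by observing that an approximating sequence for an element of $\mathrm{dom}(\widehat{\partial}_{X:B})$ is automatically $\||\cdot\||_{(1)}$-Cauchy. You additionally supply the injectivity step (via $\||\eta\||_{(1)}=\|\iota[\eta]\|+\|\widetilde{\partial}_{X:B}[\eta]\|_{\pi}$ and $\widetilde{\partial}_{X:B}=\widehat{\partial}_{X:B}\circ\iota$), which the paper omits; that step is also sound.
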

\begin{proof}
    We show only the second part. By Lemma \ref{sob}\ref{sob3}, that $\mathrm{ran}(\iota)\subset\mathrm{dom}(\widehat{\partial}_{X:B})$ holds. Conversely, for any $f(X)\in\mathrm{dom}(\widehat{\partial}_{X:B})$ there exists a sequence $\{p_{n}(X)\}_{n=1}^{\infty}\subset B\langle X\rangle$ such that $p_{n}(X)\xrightarrow{n\to\infty}f(X)$ in $\|\cdot\|$ and $\partial_{X:B}[p_{n}(X)]\xrightarrow{n\to\infty}\widehat{\partial}_{X:B}[f(X)]$ in $\|\cdot\|_{\pi}$. Then, we have
    \begin{gather*}
        \||p_{n}(X)-p_{m}(X)\||_{(1)}
        =\|p_{n}(X)-p_{m}(X)\|+\|\partial_{X:B}[p_{n}(X)]-\partial_{X:B}[p_{m}(X)]\|_{\pi}\\
        \xrightarrow{n\to\infty}\|f(X)-f(X)\|+\|\widehat{\partial}_{X:B}[f(X)]-\widehat{\partial}_{X:B}[f(X)]\|_{\pi}=0. 
    \end{gather*}
    Therefore, there exists an $\eta\in B^{(1)}(X)$ such that $p_{n}(X)\xrightarrow{n\to\infty}\eta$ in $\||\cdot\||_{(1)}$ and we have $f(X)=\iota[\eta]$. Thus, $\mathrm{dom}(\widehat{\partial}_{X:B})\subset\mathrm{ran}(\iota)$.
\end{proof}

We are now in a position to prove the main theorem.

\begin{Thm}[An operator-coefficients free Poincar\'{e} inequality]\label{ovfpi}
We have that for any $f(X)\in\mathrm{dom}(\widehat{\partial}_{X:B})$
     \begin{equation*}
      |f(X)-E[f(X)]|_{2}\leq 2|X|_{2}\|\widehat{\partial}_{X:B}[f(X)]\|_{\pi},
     \end{equation*}
 or equivalently, by Lemma \ref{sobd}, for any $f(X)\in B^{(1)}(X)$, the same inequality also holds with $\widetilde{\partial}_{X:B}[f(X)]$ in place of $\widehat{\partial}_{X:B}[f(X)]$, where $\|\cdot\|_{\pi}$ is the projective tensor norm on $C^*(B\langle X\rangle)^{\widehat{\otimes}2}$.
\end{Thm}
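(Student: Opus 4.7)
My plan is to prove the inequality first on the dense subalgebra $B\langle X\rangle$ by a free-copy argument, then extend to $\mathrm{dom}(\widehat{\partial}_{X:B})$ using the density built into the construction of $\widehat{\partial}_{X:B}$. For the extension step, given $f(X)\in\mathrm{dom}(\widehat{\partial}_{X:B})$, I would take polynomials $p_{n}(X)\in B\langle X\rangle$ with $p_{n}(X)\to f(X)$ in $\|\cdot\|$ (hence in $|\cdot|_{2}$) and $\partial_{X:B}[p_{n}(X)]\to\widehat{\partial}_{X:B}[f(X)]$ in $\|\cdot\|_{\pi}$, apply the polynomial bound to each $p_{n}$, and let $n\to\infty$, using the $|\cdot|_{2}$-continuity of $E$.

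For the polynomial case, the key idea is to introduce an amalgamated free copy of $X$ over $B$: form $(\widetilde{M},\widetilde{\tau})=(M,\tau)*_{B}(M',\tau)$, where $M'$ is another copy of $M$, and let $X'\in M'$ correspond to $X$. Then $X$ and $X'$ are $B$-free with the same $B$-valued distribution, and the $\widetilde{\tau}$-preserving conditional expectation $\widetilde{E}_{M}\colon\widetilde{M}\to M$ satisfies $\widetilde{E}_{M}[p(X')]=E[p(X)]$ for every $p(X)\in B\langle X\rangle$. The crucial observation is the telescoping identity: for a single monomial $p(X)=b_{0}Xb_{1}\cdots Xb_{n}$, replacing one $X$ by $X'$ at a time yields
\[
p(X)-p(X')=\sum_{i=1}^{n}(b_{0}Xb_{1}\cdots Xb_{i-1})\,(X-X')\,(b_{i}X'b_{i+1}\cdots X'b_{n}),
\]
so by linearity, whenever $\partial_{X:B}[p(X)]=\sum_{i}\alpha_{i}\otimes\beta_{i}$, we have $p(X)-p(X')=\sum_{i}\alpha_{i}(X-X')\beta_{i}'$, where $\beta_{i}'$ is $\beta_{i}$ with $X$ replaced by $X'$.

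Applying the standard $L^{2}$-operator bound $|ayb|_{2}\le\|a\|\|b\||y|_{2}$ in $L^{2}(\widetilde{M})$, noting that $\|\beta_{i}'\|=\|\beta_{i}\|$, and taking the infimum over representations of $\partial_{X:B}[p(X)]$, I obtain
\[
|p(X)-p(X')|_{2}\le\|\partial_{X:B}[p(X)]\|_{\pi}\cdot|X-X'|_{2}\le 2|X|_{2}\|\partial_{X:B}[p(X)]\|_{\pi},
\]
where the last step is the trivial triangle inequality $|X-X'|_{2}\le|X|_{2}+|X'|_{2}=2|X|_{2}$. Since $\widetilde{E}_{M}$ is an $L^{2}$-contraction and $\widetilde{E}_{M}[p(X)-p(X')]=p(X)-E[p(X)]$, the same bound descends to $|p(X)-E[p(X)]|_{2}$, which is the polynomial inequality.

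The step that will require the most care is verifying the telescoping identity and the mixed $L^{2}$-operator estimate $|\alpha_{i}(X)(X-X')\beta_{i}(X')|_{2}\le\|\alpha_{i}\|\|\beta_{i}\|\cdot|X-X'|_{2}$ in the amalgamated free product setting; both are essentially routine once the embeddings of $M$ and $M'$ into $\widetilde{M}$ are fixed. The constant $2$ is loose; a tighter constant could be obtained by using $B$-freeness of $X,X'$ to get $|X-X'|_{2}^{2}=2(|X|_{2}^{2}-|E[X]|_{2}^{2})$ and by replacing the contraction $|\widetilde{E}_{M}[\cdot]|_{2}\le|\cdot|_{2}$ with the sharper orthogonality $|p(X)-E[p(X)]|_{2}=\tfrac{1}{\sqrt{2}}|p(X)-p(X')|_{2}$, but this refinement is unnecessary for the stated inequality.
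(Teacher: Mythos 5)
Your proposal is correct, but it takes a genuinely different route from the paper. The paper works entirely inside $M$: it verifies the algebraic identity $(\mu\circ(\mathrm{id}\otimes E))(\partial_{X:B}[p(X)]\sharp(X\otimes1-1\otimes X))=p(X)-E[p(X)]$ by a direct telescoping computation on monomials, and then estimates each term $q_{i,1}(X)XE[q_{i,2}(X)]-q_{i,1}(X)E[Xq_{i,2}(X)]$ in $|\cdot|_{2}$ using traciality and contractivity of $E$. Your argument instead builds the amalgamated free product $\widetilde{M}=M*_{B}M'$, proves the telescoping identity $p(X)-p(X')=\sum_{i}\alpha_{i}(X-X')\beta_{i}'$ upstairs, estimates there, and only then projects down via the $L^{2}$-contraction $\widetilde{E}_{M}$. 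The two are in fact two sides of the same computation: applying $\widetilde{E}_{M}$ to your identity termwise gives $\widetilde{E}_{M}[\alpha_{i}X\beta_{i}'-\alpha_{i}X'\beta_{i}']=\alpha_{i}XE[\beta_{i}]-\alpha_{i}E[X\beta_{i}]$, which is exactly the paper's expression; the paper estimates after projecting, you estimate before. What your route costs is the (standard but not free) construction of the tracial amalgamated free product with its $\tau$-preserving conditional expectations and the identification $\widetilde{E}_{M}|_{M'}=E'$; what it buys is conceptual transparency and, as you observe, a strictly better constant, since $B$-freeness gives $|p(X)-E[p(X)]|_{2}=\tfrac{1}{\sqrt{2}}|p(X)-p(X')|_{2}$ and $|X-X'|_{2}=\sqrt{2}\,(|X|_{2}^{2}-|E[X]|_{2}^{2})^{1/2}$, yielding the sharper bound $(|X|_{2}^{2}-|E[X]|_{2}^{2})^{1/2}\|\widehat{\partial}_{X:B}[f(X)]\|_{\pi}$ -- this mirrors how the scalar-coefficient inequality is proved in \cite[section 8.1]{misp17}. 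One point to make explicit when you write it up: to pass to the infimum defining $\|\cdot\|_{\pi}$ you need the assignment $a\otimes b\mapsto a\,(X-X')\,b'$ to be a well-defined linear map on the tensor product, independent of the chosen representation of $\partial_{X:B}[p(X)]$, and bounded by $\|a\|\,\|b\|\,|X-X'|_{2}$ so that it extends to $C^*(B\langle X\rangle)^{\widehat{\otimes}2}$ (the projective norm is an infimum over representations with entries in $C^*(B\langle X\rangle)$, not merely in $B\langle X\rangle$); this is immediate from bilinearity and the estimate $|ayb|_{2}\leq\|a\|\,\|b\|\,|y|_{2}$, but it is the step on which the whole infimum argument rests. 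The closure/density argument in your final step is identical to the paper's.
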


\begin{proof}
 By the continuity of $E$ and of norm, it suffices to show the inequality for any non-commutative polynomial $p(X)\in B\langle X\rangle$ (in this case, of course, $\partial_{X:B}[p(X)]=\widehat{\partial}_{X:B}[p(X)]=\widetilde{\partial}_{X:B}[p(X)]$), and denote by $\mu$ the multiplication map from $B\langle X\rangle^{\otimes2}$ to $B\langle X\rangle$. 
 Let $\sharp$ be a bilinear map on $B\langle X\rangle^{\otimes2}$ such that $(a_{1}\otimes a_{2})\sharp(a_{3}\otimes a_{4})=(a_{1}a_{3})\otimes(a_{4}a_{2})$ for every $a_{i}\in B\langle X\rangle$. For any $p(X)\in B\langle X\rangle$ and any expression $\partial_{X:B}[p(X)]=\sum_{i=1}^{N}q_{i,1}(X)\otimes q_{i,2}(X)\in B\langle X\rangle^{\otimes2}$ with monomials $q_{i,j}(X)$, we have 
 \begin{align*}
     (\mu\circ(\mathrm{id}\otimes E))(\partial_{X:B}[p(X)]\sharp(X\otimes1-1\otimes X))
     &=\sum_{i=1}^{N}(q_{i,1}(X)XE[q_{i,2}(X)]-q_{i,1}(X)E[Xq_{i,2}(X)]).
 \end{align*}
 On the other hand, for any monomial $q(X)=b_{0}Xb_{1}\cdots Xb_{n}\in B\langle X\rangle$, we have
 \begin{align*}
     \partial_{X:B}[q(X)]\sharp(X\otimes1-1\otimes X)
     &=\left(\sum_{i=1}^{n}b_{0}Xb_{1}\cdots b_{i-1}\otimes b_{i}X\cdots Xb_{n}\right)\sharp(X\otimes1-1\otimes X)\\
     &=b_{0}X\otimes b_{1}\cdots Xb_{n}-b_{0}\otimes Xb_{1}\cdots Xb_{n}\\
     &\quad+b_{0}Xb_{1}X\otimes b_{2}\cdots Xb_{n}-b_{0}Xb_{1}\otimes Xb_{2}\cdots Xb_{n}\\
     &\quad+b_{0}Xb_{1}Xb_{2}X\otimes b_{3}\cdots Xb_{n}-b_{0}Xb_{1}Xb_{2}\otimes Xb_{3}\cdots Xb_{n}\\
     &\qquad\qquad\vdots\\
     &\quad+b_{0}Xb_{1}X\cdots b_{n-1}X\otimes b_{n}
     -b_{0}Xb_{1}X\cdots b_{n-1}\otimes Xb_{n}.
 \end{align*}
 Since $E$ is a $B$-bimodule map, it follows that
 \begin{align*}
    (\mu\circ(\mathrm{id}\otimes E))(\partial_{X:B}[q(X)]\sharp(X\otimes1-1\otimes X))
    &=b_{0}X E[b_{1}\cdots Xb_{n}]-b_{0} E[Xb_{1}\cdots Xb_{n}]\\
    &\quad+b_{0}Xb_{1}X E[b_{2}\cdots Xb_{n}]-b_{0}Xb_{1} E[Xb_{2}\cdots Xb_{n}]\\
    &\quad+b_{0}Xb_{1}Xb_{2}X E[b_{3}\cdots Xb_{n}]-b_{0}Xb_{1}Xb_{2} E[Xb_{3}\cdots Xb_{n}]\\
    &\qquad\qquad\vdots\\
    &\quad+b_{0}Xb_{1}X\cdots b_{n-1}X E[b_{n}]
    -b_{0}Xb_{1}X\cdots b_{n-1} E[Xb_{n}]\\
    &=b_{0}X E[b_{1}\cdots Xb_{n}]-E[q(X)]\\
    &\quad+b_{0}Xb_{1}X E[b_{2}\cdots Xb_{n}]-b_{0}XE[b_{1}Xb_{2}\cdots Xb_{n}]\\
    &\quad+b_{0}Xb_{1}Xb_{2}XE[b_{3}\cdots Xb_{n}]-b_{0}Xb_{1}XE[b_{2}Xb_{3}\cdots Xb_{n}]\\
    &\qquad\qquad\vdots\\
    &\quad+q(X)
     -b_{0}Xb_{1}X\cdots XE[b_{n-1}Xb_{n}]\\
    &=q(X)-E[q(X)].
 \end{align*}
 By linearlity, we obtain
 \begin{equation*}
     (\mu\circ(\mathrm{id}\otimes E))(\partial_{X:B}[p(X)]\sharp(X\otimes1-1\otimes X))=p(X)-E[p(X)]
 \end{equation*}
 for any $p(X)\in B\langle X\rangle$. 
Therefore, we have
 \begin{align*}
     |p(X)-E[p(X)]|_{2}
     &=|(\mu\circ(\mathrm{id}\otimes E))(\partial_{X:B}[p]\sharp(X\otimes1-1\otimes X))|_{2}\\
     &=\left|\sum_{i=1}^{N}(q_{i,1}(X)XE[q_{i,2}(X)]-q_{i,1}(X)E[Xq_{i,2}(X)])\right|_{2}\\
     &\leq\sum_{i=1}^{N}(|q_{i,1}(X)XE[q_{i,2}(X)]|_{2}+|q_{i,1}(X)E[Xq_{i,2}(X)]|_{2})\\
     &\leq2|X|_{2}\sum_{i=1}^{N}\|q_{i,1}(X)\|\cdot\|q_{i,2}(X)\|
 \end{align*}
 since $\tau$ is tracial and $E$ is contractive. It follows that
 \begin{equation*}
     |p(X)-E[p(X)]|_{2}\leq2|X|_{2}\|\partial_{X:B}[p(X)]\|_{\pi}
 \end{equation*}
 by the definition of projective tensor norm.
\end{proof}

The inequality still holds even if the $L^2$-norm is replaced with the operator norm. The proof is completely identical.

\begin{Cor}\label{ker} Both the kernels of $\widehat{\partial}_{X:B}$ and $\widetilde{\partial}_{X:B}$ are exactly $B$.
\end{Cor}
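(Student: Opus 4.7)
The plan is to deduce Corollary \ref{ker} as an immediate consequence of Theorem \ref{ovfpi}, together with the identification $\widetilde{\partial}_{X:B} = \widehat{\partial}_{X:B} \circ \iota$ from Lemma \ref{sob}\ref{sob3} and the injectivity of $\iota$ from Lemma \ref{sobd}.

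First I would dispose of the trivial inclusion $B \subseteq \ker(\widehat{\partial}_{X:B})$: by the defining property of the free difference quotient, $\partial_{X:B}[b] = 0$ for every $b \in B$, and since $B \subseteq B\langle X\rangle \subseteq \mathrm{dom}(\widehat{\partial}_{X:B})$ and $\widehat{\partial}_{X:B}$ extends $\partial_{X:B}$, one has $\widehat{\partial}_{X:B}[b] = 0$ as well. For the reverse inclusion, I would apply Theorem \ref{ovfpi} directly: given $f(X) \in \mathrm{dom}(\widehat{\partial}_{X:B})$ with $\widehat{\partial}_{X:B}[f(X)] = 0$, the inequality yields $|f(X) - E[f(X)]|_{2} \leq 2|X|_{2}\cdot 0 = 0$, whence $f(X) = E[f(X)]$ in $L^2(M)$. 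Because both sides already live in $M$, the equality holds in $M$ as well, and therefore $f(X) = E[f(X)] \in B$.

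The kernel of $\widetilde{\partial}_{X:B}$ is then handled by transport. Since $\widetilde{\partial}_{X:B} = \widehat{\partial}_{X:B} \circ \iota$, and $\mathrm{ran}(\iota) = \mathrm{dom}(\widehat{\partial}_{X:B})$ by Lemma \ref{sobd}, we have $\ker(\widetilde{\partial}_{X:B}) = \iota^{-1}(\ker(\widehat{\partial}_{X:B})) = \iota^{-1}(B)$. Since $B \subseteq B\langle X\rangle \subseteq B^{(1)}(X)$ and $\iota|_{B\langle X\rangle} = \mathrm{id}$, we have $\iota(B) = B$; combined with the injectivity of $\iota$ this gives $\iota^{-1}(B) = B$, as required. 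There is essentially no obstacle in the argument — the only minor point worth flagging is the passage from equality in $L^2(M)$ to equality in $M$, but this is automatic since both elements lie in $M$ from the start.
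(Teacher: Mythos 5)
Your proof is correct and is exactly the argument the paper intends (the paper states the corollary without proof as an immediate consequence of Theorem \ref{ovfpi}): the inequality forces $|f(X)-E[f(X)]|_{2}=0$, and faithfulness of $\tau$ — rather than merely the fact that both elements lie in $M$ — is what upgrades this to $f(X)=E[f(X)]$ in $M$. The transport to $\ker(\widetilde{\partial}_{X:B})$ via $\widetilde{\partial}_{X:B}=\widehat{\partial}_{X:B}\circ\iota$ and the injectivity of $\iota$ is also as intended.
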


 By $\|\partial_{X:B}[p(X)]\|\leq \|\partial_{X:B}[p(X)]\|_{\pi}$ for every $p(X)\in B\langle X\rangle$, Lemma \ref{sob}\ref{sob4} and Lemma \ref{sobd}, we have $\{f(X)\,|\,f(t)\in B_{R}\{t\}\}\subset B^{(1)}(X)=\mathrm{dom}(\widehat{\partial}_{X:B})\subset\mathrm{dom}(\overline{\partial}_{X:B})$ when $\|X\|<R$ and $\overline{\partial}_{X:B}$ is an extension of $\widehat{\partial}_{X:B}$ (via the natural injection from $M^{\widehat{\otimes}2}$ to $M^{\overline{\otimes}2}$ due to \cite[Proposition 2.2]{ha85}). Therefore, Corollary \ref{ker} shows that the kernel of the restriction of $\overline{\partial}_{X:B}$ to $B^{(1)}(X)$ is $B$. This is an improvement of \cite[Lemma 3.4]{voi00}.

}

\end{document}